\newcommand{\noun}[1]{\textsc{#1}}
\providecommand{\tabularnewline}{\\}
\numberwithin{equation}{section}
\numberwithin{figure}{section}
\theoremstyle{plain}
\newtheorem{thm}{\protect\theoremname}
  \theoremstyle{remark}
  \newtheorem{rem}[thm]{\protect\remarkname}
   \providecommand{\fg}{\ifdim\lastskip>\z@\unskip\fi~\frqq}%
  \providecommand{\remarkname}{Remark}
\providecommand{\theoremname}{Theorem}
\begin{document}

\title{Optimization approach for the Monge-Ampère equation }
\maketitle
\begin{center}
\noun{fethi ben belgacem }%
\footnote{\begin{flushleft}
\textit{Department of Mathematics,  Higher Institute of Computer Sciences and Mathematics of Monastir (ISIMM),
 Avenue de la Korniche - BP 223 - Monastir - 5000, TUNISIA. (fethi.benbelgacem@isimm.rnu.tn).}
\par\end{flushleft}%
}
\par\end{center}
\begin{abstract}
This paper studies the numerical approximation of solution of the
Dirichlet problem for the fully nonlinear Monge-Ampère equation. In
this approach, we take the advantage of reformulation the Monge-Ampère
problem as an optimization problem, to which we associate a well defined
functional whose minimum provides us with the solution to the Monge-Ampère
problem after resolving a Poisson problem by the finite element Galerkin
method. We present some numerical examples, for which a good approximation
is obtained in 68 iterations.

{\bf{Key words.}} elliptic Monge-Ampère equation, gradient conjugate
method, finite element Galerkin method.

{\bf{AMS subject classifications.}} 35J60, 65K10, 65N30.
\end{abstract}

\section{Introduction}

In this paper, we give a numerical solution for the following Monge-Ampère
problem
\begin{equation}
\left\{ \begin{array}{cc}
\mbox{det}[D^{2}u]=f(x) & x\in\Omega,\\
u_{|\Gamma}=0, & u\:\textrm{convex},
\end{array}\right.\label{eq}
\end{equation}

where $\Omega$ is a smooth convex and bounded domain in $\mathbb{R}^{2},$
$\left[D^{2}u\right]$ is the Hessian of $u$ and $f\in C^{\infty}(\overline{\Omega}),\textrm{ }f>0.$

Equation (\ref{eq}) belongs to the class of fully nonlinear elliptic
equation. The mathematical analysis of real Monge-Ampère and related
equations has been a source of intense investigations in the last
decades; let us mention the following references ( among many others
and in addition to {[}7{]}, {[}9{]}, {[}15{]}): {[}10{]}, {[}8{]},
{[}17, chapter 4{]}, {[}2{]}, {[}28{]}, {[}11{]}-{[}14{]}. Applications
to Mechanics and Physics can be found in {[}27{]}, {[}4{]}, {[}5{]},
{[}18{]}, {[}24{]}, {[}26{]},{[}31{]}, (see also the references therein).

The numerical approximations of the Monge-Ampère equation as well
as related equations have recently been reported in the literature.
Let us mention the references {[}4{]}, {[}29{]}, {[}39{]}, {[}26 {]},
{[}11{]}, {[}32{]}, {[}25{]},{[}28{]}, {[}33{]}; the method discussed
in {[}11{]}, {[}32{]},{[}25{]} is very geometrical in nature. In contrast
with the method introduced by Dean and Glowinski in {[}19 {]} {[}20{]}
{[}21{]}, which is of the variational type. 

On the existence of smooth solution for (\ref{eq}), we recall that
if $f\in C^{\infty}(\overline{\Omega})$ equation (\ref{eq}) has
a unique strictly convex solution $u\in C^{\infty}(\overline{\Omega})$
(see {[}14{]}).

To obtain a numerical solution for (\ref{eq}), we propose a least-square
formulation of (\ref{eq}). In this approach, we take the advantage
of reformulation of the Monge-Ampère problem as a well defined optimization
problem, to which we associate a well functional whose minimum provides
us with the solution to the Monge-Ampère problem after resolving a
Poisson problem by the finite element Galerkin method. The minimum
is computed by the conjugate gradient method.

The remainder of this article is organized as follows. In section
2, We introduce the optimization problem. In section 3, we discuss
a conjugate gradient algorithm for the resolution of the optimization
problem. The finite element implementation of the above algorithm
is discussed in section 4. Finally, in section 5, we show some numerical
results.

\section{Formulation of the Dirichlet problem for the elliptic Monge-Ampère
equation }

Let $u_{I}$ be the solution of (\ref{eq}). Let $\lambda_{1}$ and
$\lambda_{2}$ be the eigenvalues of the matrix $[D^{2}u_{I}].$ We
have
\[
\left\{ \begin{array}{ccc}
\lambda_{1}+\lambda_{2} & = & \Delta u_{I},\\
\lambda_{1}\lambda_{2} & = & \textrm{det}[D^{2}u_{I}]=f.
\end{array}\right.
\]
Then $\lambda_{1}$ and $\lambda_{2}$ are the solutions of the equation
\[
X^{2}-\Delta u_{I}X+f=0.
\]
So 
\[
(\Delta u_{I})^{2}-4f\geq0.
\]
Then
\[
\Delta u_{I}-2\sqrt{f}\geq0.
\]
 Let us set
\[
\Delta u_{I}-2\sqrt{f}=\widetilde{g}\in C^{\infty}(\overline{\Omega}).
\]
We conclude that $u_{I}$ is solution of the following Dirichlet Poisson
problem 
\[
\mathcal{P}_{\widetilde{g}}\left\{ \begin{array}{c}
\Delta u=2\sqrt{f}+\widetilde{g},\\
u_{|\Gamma}=0.
\end{array}\right.
\]

\begin{flushleft}
To compute $\widetilde{g},$ we consider the least-squares functional
$J$ defined on 
\[
E=\left\{ \varphi\in C^{\infty}(\overline{\Omega}),\:\varphi\geq0\right\} ,
\]
as follows: 
\[
J(g)=\frac{1}{2}\int_{\Omega}\left(\textrm{det}\left(D^{2}u^{g}\right)-f\right)^{2}dx,
\]
where $u^{g}$ is the solution of the Dirichlet Poisson problem 
\[
\mathcal{P}^{g}\left\{ \begin{array}{c}
\Delta u=2\sqrt{f}+g,\\
u_{|\Gamma}=0.
\end{array}\right.
\]
 The minimization problem
\begin{equation}
\left\{ \begin{array}{cc}
\widetilde{g}\in E,\\
J(\widetilde{g})\leq J(g) & \forall g\in E,
\end{array}\right.\label{eq:opti1}
\end{equation}
is thus a least-squares formulation of (\ref{eq}). 
\par\end{flushleft}
\begin{thm}
$u_{I}$ is the strictly convex solution of (\ref{eq}) if and only
if there exist a unique solution $\widetilde{g}$ of (\ref{eq:opti1})
such that $u_{I}=u^{\tilde{g}}.$\end{thm}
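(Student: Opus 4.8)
The plan is to establish the two directions of the equivalence separately, relying only on the preliminary reduction carried out above (the discriminant inequality $(\Delta u_I)^2 \ge 4f$ valid for the convex solution, and the unique solvability of the Dirichlet Poisson problem) together with the recalled fact that (\ref{eq}) admits a unique strictly convex solution $u_I \in C^\infty(\overline{\Omega})$.

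First I would treat the ``only if'' direction. Assuming $u_I$ is the strictly convex solution of (\ref{eq}), I set $\widetilde{g} := \Delta u_I - 2\sqrt{f}$. Since $u_I \in C^\infty(\overline{\Omega})$ and $f \in C^\infty(\overline{\Omega})$ with $f > 0$, one has $\widetilde{g} \in C^\infty(\overline{\Omega})$; and $\widetilde{g} \ge 0$ because $(\Delta u_I)^2 \ge 4f$ while $\Delta u_I = \mathrm{tr}[D^2 u_I] \ge 0$ by convexity. Hence $\widetilde{g} \in E$. By construction $u_I$ solves $\mathcal{P}^{\widetilde{g}}$, so uniqueness for the Poisson problem gives $u^{\widetilde{g}} = u_I$, whence $\det(D^2 u^{\widetilde{g}}) = \det(D^2 u_I) = f$ and $J(\widetilde{g}) = 0$. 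Since $J \ge 0$ on $E$, this $\widetilde{g}$ solves (\ref{eq:opti1}) and satisfies $u_I = u^{\widetilde{g}}$.

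For the ``if'' direction, let $\widetilde{g} \in E$ be a solution of (\ref{eq:opti1}). The previous paragraph shows $\inf_E J = 0$, so $J(\widetilde{g}) = 0$ and therefore $\det(D^2 u^{\widetilde{g}}) = f$ throughout $\Omega$, the integrand being continuous. The crucial point is that the sign constraint $\widetilde{g} \ge 0$ forces $u^{\widetilde{g}}$ to be convex: writing $\mu_1, \mu_2$ for the eigenvalues of $D^2 u^{\widetilde{g}}$, we have $\mu_1 + \mu_2 = \Delta u^{\widetilde{g}} = 2\sqrt{f} + \widetilde{g} > 0$ and $\mu_1 \mu_2 = f > 0$, so both eigenvalues are positive and $u^{\widetilde{g}}$ is strictly convex. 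Together with $u^{\widetilde{g}}|_\Gamma = 0$, this makes $u^{\widetilde{g}}$ a strictly convex solution of (\ref{eq}), hence $u^{\widetilde{g}} = u_I$ by uniqueness. Uniqueness of $\widetilde{g}$ then follows: any two solutions $\widetilde{g}_1, \widetilde{g}_2$ of (\ref{eq:opti1}) both give $u^{\widetilde{g}_i} = u_I$, so $\widetilde{g}_i = \Delta u^{\widetilde{g}_i} - 2\sqrt{f} = \Delta u_I - 2\sqrt{f}$ for $i = 1, 2$.

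Most of this is routine; the one step that deserves genuine care is the convexity argument in the ``if'' direction, because for a general $g \in E$ the function $u^g$ need not be convex, and it is precisely the combination $\det(D^2 u^g) = f > 0$ with $\mathrm{tr}[D^2 u^g] = 2\sqrt{f} + g > 0$ that excludes the spurious branch in which both eigenvalues of $D^2 u^g$ are negative. I expect this to be the main obstacle, modest as it is; the remaining verifications (smoothness of $\widetilde{g}$, well-posedness of $\mathcal{P}^g$, nonnegativity of $J$) are immediate.
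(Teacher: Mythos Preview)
Your proof is correct and follows essentially the same route as the paper: both directions rest on showing $J(\widetilde{g})=0$, and the convexity of $u^{\widetilde{g}}$ in the converse is obtained exactly as you do, from $\mathrm{tr}[D^2u^{\widetilde{g}}]=2\sqrt{f}+\widetilde{g}>0$ together with $\det[D^2u^{\widetilde{g}}]=f>0$. Your write-up is in fact more careful than the paper's, which asserts uniqueness of $\widetilde{g}$ without spelling out the argument you give (that any minimizer $\widetilde{g}$ must satisfy $\widetilde{g}=\Delta u_I-2\sqrt{f}$).
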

\begin{proof}
Since $u_{I}$ is solution of $(\mathcal{P}^{\tilde{g}}),$ we have
$u_{I}=u^{\tilde{g}}.$ So $J(\widetilde{g})=0$ and $\tilde{g}$
is a unique solution of (\ref{eq:opti1}).

Conversely, let $\overline{g}$ be a solution of (\ref{eq:opti1}).
Since (\ref{eq}) has a solution $u_{I},$ we can deduce immediatly
that $J(\widetilde{g})=0$ and so, $J(\overline{g})=0.$ It follows
that 
\[
\left\{ \begin{array}{c}
\textrm{det}[D^{2}u^{\bar{g}}]=f,\\
u{}_{|\Gamma}^{\bar{g}}=0.
\end{array}\right.
\]
We have $\Delta u^{\bar{g}}=2f+\overline{g}>0$ and $\textrm{det}[D^{2}u^{\bar{g}}]>0,$
we can deduce that $u^{\bar{g}}$ is strictly convex and from the
uniqueness of solution for (\ref{eq}) we get $u^{\bar{g}}=u_{I}.$ 
\end{proof}

\section{Iterative solution for the minimisation problem }

\subsection{Description of the algorithm. }

The algorithm we consider to solve the problem (\ref{eq:opti1}) which
is based on the PRP (Polak-Ribière-Polyak {[}36,37{]}) conjugate gradient
method reads: 

Given $g^{0}\in E;$ 

then, for\emph{ $k\geq0,$ $g^{k}$} being known in $E$, solve\emph{
}
\[
\mathcal{P}^{g^{k}}\left\{ \begin{array}{c}
\Delta u=2\sqrt{f}+g^{k},\\
u_{|\Gamma}=0.
\end{array}\right.
\]

\emph{Compute, $\nabla J(g^{k})$, }
\begin{quote}
\emph{If $k\geq1,$ $\beta^{k}=\dfrac{\nabla J(g^{k})^{T}(\nabla J(g^{k})-\nabla J(g^{k-1})}{\left\Vert \nabla J(g^{k-1})\right\Vert _{2}^{2}};$}

\emph{
\[
d^{k}=\left\{ \begin{array}{c}
-\nabla J(g^{0})\quad\quad\quad\qquad\qquad if\, k=0\\
-\nabla J(g^{k})+\beta^{k}d^{k-1}\quad\quad\quad\quad if\, k\geq1;
\end{array}\right.
\]
}

\emph{and update $g^{k}$} \emph{by 
\[
g^{k+1}=g^{k}+\alpha^{k}d^{k}.
\]
}
\end{quote}
Where $\alpha_{k}$ is computed with the Armijo-type line search.

\subsection{Solution of sub-problem $(\mathcal{P}^{g}).$ }

We consider first the variational formulation of $(\mathcal{P}^{g})$
\begin{equation}
\left\{ \begin{array}{ccc}
\textrm{Find} & u^{g}\in H_{0}^{1}(\Omega), & \textrm{such that,}\\
a(u^{g},v)=L(v), & \forall v\in H_{0}^{1}(\Omega),
\end{array}\right.\label{eq:v1}
\end{equation}
where 
\begin{equation}
a(u,v)=\int_{\Omega}\nabla u\nabla vdx\label{eq:e1}
\end{equation}
and 
\begin{equation}
L(v)=-\int_{\Omega}\left(2\sqrt{f}+g\right)vdx,\label{eq:m1}
\end{equation}
$a$ in (\ref{eq:e1}) is coercive on $H_{0}^{1}(\Omega).$ For $f\in L^{2}(\Omega)$we
have $\sqrt{f}\in L^{2}(\Omega).$ Since $\Omega$ is bounded and
for $g\in L^{2}(\Omega),$ $L$ in (\ref{eq:m1}) is continuous, then
by the Lax-Milgram theorem $(\mathcal{P}^{g})_{V}$ has a unique solution
$u^{g}.$

\section{Finite element approximation of the minimization problem}

For simplicity, we assume that $\Omega$ is a bounded polygonal domain
of $\mathbb{R}^{2}.$ Let $\mathcal{T}_{h}$ a finite triangulation
of $\Omega$ (like those discussed in e.g, {[}16{]}).

We introduce a

with $P_{1}$ the space of the two-variable polynomials of degree
$\leq1.$ A function $\varphi$ being given in $H^{2}(\Omega)$ we
denote $\frac{\partial^{2}\varphi}{\partial x_{i}x_{j}}$ by 
\begin{equation}
\int_{\Omega}\frac{\partial^{2}\varphi}{\partial x_{i}^{2}}vdx=-\int_{\Omega}\frac{\partial\varphi}{\partial x_{i}}\frac{\partial v}{\partial x_{i}}dx,\:\forall v\in H_{0}^{1}(\Omega),\:\forall i=1,2,\label{eq:49}
\end{equation}
\begin{equation}
\int_{\Omega}\frac{\partial^{2}\varphi}{\partial x_{1}x_{2}}vdx=-\frac{1}{2}\int_{\Omega}\left[\frac{\partial\varphi}{\partial x_{1}}\frac{\partial v}{\partial x_{2}}+\frac{\partial\varphi}{\partial x_{2}}\frac{\partial v}{\partial x_{1}}\right]dx,\:\forall v\in H_{0}^{1}(\Omega).\label{eq:50}
\end{equation}
Let $\varphi\in V_{h};$ taking advantage of relations (\ref{eq:49})
and (\ref{eq:50}) we define the discrete analogues of the differential
operators $D_{ij}^{2}$ by
\begin{equation}
\left\{ \begin{array}{c}
\forall i=1,2,\: D_{hii}^{2}(\varphi)\in V_{0h},\quad\quad\qquad\qquad\qquad\\
\int_{\Omega}D_{hii}^{2}(\varphi)vdx=-{\displaystyle \int_{\Omega}\frac{\partial\varphi}{\partial x_{i}}\frac{\partial v}{\partial x_{i}}dx,\:\forall v\in V_{0h},}
\end{array}\right.\label{eq:51}
\end{equation}
\begin{equation}
\left\{ \begin{array}{c}
D_{h12}^{2}(\varphi)\in V_{0h},\quad\quad\qquad\qquad\qquad\\
\int_{\Omega}D_{h12}^{2}(\varphi)vdx=-{\displaystyle \frac{1}{2}\int_{\Omega}\left[\frac{\partial\varphi}{\partial x_{1}}\frac{\partial v}{\partial x_{2}}+\frac{\partial\varphi}{\partial x_{2}}\frac{\partial v}{\partial x_{1}}\right]dx,\:\forall v\in V_{0h}.}
\end{array}\right.\label{eq:52}
\end{equation}

To compute the above discrete second order partial derivatives we
will use the trapezoidal rule to evaluate the inegrals in the left
hand sides of (\ref{eq:51}) and (\ref{eq:52}). We consider the set
${\scriptstyle \sum}_{h}$ of the vertices of $\mathcal{T}_{h}$ and
${\scriptstyle {\textstyle }\sum}_{0h}$=$\left\{ P\left|\right.P\in{\scriptstyle \sum}_{h},P\notin\Gamma\right\} .$
We define the integers $N_{h}$ and $N_{0h}$ by $N_{h}=\mbox{Card}({\scriptstyle \sum}_{h})$
and $N_{0h}=\mbox{Card}({\scriptstyle \sum}_{0h}).$ So dim$V_{h}=N_{h}$
and dim$V_{0h}=N_{0h}.$ 

For $P_{k}\in{\scriptstyle \sum}_{h}$ we associate the function $w_{k}$
uniquely defined by 
\[
w_{k}\in V_{h},\, w_{k}(P_{k})=1,\: w_{k}(P_{l})=0,\:\mbox{if}\: l=1,...N_{h,}\: l\neq k.
\]

It is well known (e.g., {[}16{]}) that the sets $\mathfrak{B}_{h}=\left\{ w_{k}\right\} _{k=1}^{N_{h}}$
and $\mathfrak{B}_{0h}=\left\{ w_{k}\right\} _{k=1}^{N_{0h}}$ are
vector bases of $V_{h}$ and $V_{0h},$ respectively.

We denote by $A_{k}$ the area of the polygonal which is the union
of those triangles of $\mathcal{T}_{h}$ which have $P_{k}$ as a
common vertex. By applying the trapezoidal rule to the integrals in
the left hand side of relations (\ref{eq:51}) and (\ref{eq:52})
we obtain:
\begin{equation}
\left\{ \begin{array}{c}
\forall i=1,2,\: D_{hii}^{2}(\varphi)\in V_{0h},\quad\quad\qquad\qquad\qquad\\
D_{hii}^{2}(\varphi)(P_{k})=-{\displaystyle \frac{3}{A_{k}}\int_{\Omega}\frac{\partial\varphi}{\partial x_{i}}\frac{\partial w_{k}}{\partial x_{i}}dx,\:\forall k=1,2,...,N_{0h},}
\end{array}\right.\label{eq:55}
\end{equation}
\begin{equation}
\left\{ \begin{array}{c}
D_{h12}^{2}(\varphi)\left(=D_{h21}^{2}(\varphi)\right)\in V_{0h},\quad\quad\qquad\qquad\qquad\\
D_{h12}^{2}(\varphi)(P_{k})=-{\displaystyle \frac{3}{2A_{k}}\int_{\Omega}\left[\frac{\partial\varphi}{\partial x_{1}}\frac{\partial w_{k}}{\partial x_{2}}+\frac{\partial\varphi}{\partial x_{2}}\frac{\partial w_{k}}{\partial x_{1}}\right]dx,\:\forall k=1,2,...,N_{0h}.}
\end{array}\right.\label{eq:56}
\end{equation}

Computing the integrals in the right hand sides of (\ref{eq:55})
and (\ref{eq:56}) is quite simple since the first order derivatives
of $\varphi$ and $w_{k}$ are piecewise constant.

Taking the above relations into account. We approximate the space
$E$ by 
\[
E_{h}=\left\{ \varphi\in V_{h},\:\varphi\geq0\right\} ,
\]
and then the minimization problem (\ref{eq:opti1}) by 
\[
\left\{ \begin{array}{cc}
\widetilde{g}_{h}\in E_{h},\\
J_{h}(\widetilde{g}_{h})\leq J_{h}(g_{h}) & \forall g_{h}\in E_{h},
\end{array}\right.
\]
 Where
\[
J_{h}(g_{h})=\frac{1}{6}{\displaystyle \sum_{k=1}^{N_{h0}}A_{k}\left|D_{h11}^{2}(u_{h}^{g_{h}})(P_{k})D_{h22}^{2}(u_{h}^{g_{h}})(P_{k})-\left(D_{h12}^{2}(u_{h}^{g_{h}}(P_{k})\right)^{2}-f_{h}(P_{k})\right|^{2}},\:\forall g_{h}\in E_{h},
\]
and $f_{h},$ $\widetilde{g}_{h}$, $g_{h}$ are respectively a continuous
approximations of functions $f,$ $\widetilde{g},$ $g$ and $u_{h}^{g_{h}}$
is the solution of the discret variant of the Dirichlet Poisson problem
$(\mathcal{P}^{g})$.

\subsection{Discrete variant of the algorithm }

We will discuss now the solution of (\ref{eq:opti1}) by a discrete
variant of algorithm 3.1. 

\emph{Given $g_{h}^{0}\in E_{h};$ }

then, for\emph{ $k\geq0,$ $g_{h}^{k}$ being known in $E_{h}$, solve
$\mathcal{P}^{g_{h}^{k}},$ }

\emph{Compute, $\nabla J(g_{h}^{k})$, $\gamma_{h}^{k}=\left\Vert \nabla J(g_{h}^{k})\right\Vert _{2}^{2};$}
\begin{quote}
\emph{If $k\geq1,$ $\beta_{h}^{k}=\gamma_{h}^{k}/\gamma_{h}^{k-1};$}

\emph{
\[
d_{h}^{k}=\left\{ \begin{array}{c}
-\nabla J(g^{0})\quad\quad\quad\qquad\qquad if\, k=0\\
-\nabla J(g_{h}^{k})+\beta_{h}^{k}d_{h}^{k-1}\; if\, k\geq1;
\end{array}\right.
\]
}

and update $g_{h}^{k}$\emph{ by 
\[
g_{h}^{k+1}=g_{h}^{k}+\alpha_{h}^{k}d_{h}^{k}.
\]
}\end{quote}
\begin{rem}
There are many approches for finding an avaible step size $\alpha_{h}^{k}$.
Among them the exact line search is an ideal one, but is cost-consuming
or even impossible to use to find the step size. Some inexact line
searches are sometimes useful and effective in practical computation,
such as Armijo line search {[}1{]}, Goldstein line search and Wolfe
line search {[}24,38{]}.

The Armijo line search is commonly used and easy to implement in practical
computation. 

\textbf{Armijo line search}

Let $s>0$ be a constant, $\rho\in\left(0,1\right)$ and $\mu\in\left(0,1\right).$
Choose $\alpha_{k}$ to be the largest $\alpha$ in $\left\{ s,\: s\rho,\: s\rho^{2},...,\right\} $
such that 
\[
J_{h}(g_{h}^{k})-J_{h}(x_{k}+\alpha d_{h}^{k})\geq-\alpha\mu\nabla J_{h}(g_{h}^{k})^{T}d_{h}^{k}.
\]
However, this line search cannot guarantee the global convergence
of the PRP method and even cannot guarantee $d_{k}$ to be descent
direction of $J$ at $g^{k}.$ 
\end{rem}

\subsubsection{Solution of the sub problem \emph{$\mathcal{P}_{g_{h}^{k}}$ }}

Any sub-problem \emph{$(\mathcal{P}^{g_{h}^{k}}),$ }is equivalent
to a finite dimensional variational linear problem which reads as
follows: Find $u_{h}^{g_{h}}\in V_{0h}$ such that 
\begin{equation}
a(u_{h}^{g_{h}},v_{h})=L(v_{h}),\forall v_{h}\in V_{0h}.\label{eq:ev2}
\end{equation}
By the Lax-milgram theorem we can easily show that (\ref{eq:ev2})
has a unique solution $u_{h}^{g_{h}}\in V_{0h}.$

\section{Numerical experiments}

In this section we are going to apply the method discussed in the
previous section to the solution of some test problems. For all these
test problems we shall assume that $\Omega$ is the unit disk. We
first approximate $\Omega$ by a polygonal domain $\Omega_{h}.$ We
consider $\mathcal{T}_{h}$ a finite triangulation of $\Omega_{h}.$

The \textbf{first test problem} is expressed as follows

\begin{equation}
\left\{ \begin{array}{cc}
\mbox{det}[D^{2}u]=4\left(1+2\left(\left|x\right|^{2}\right)\right)e^{2\left(\left|x\right|^{2}-1\right)} & x\in\Omega,\\
u_{|\Gamma}=0, & u\:\textrm{convex}.
\end{array}\right.\label{test1}
\end{equation}

with $\left|x\right|^{2}=x_{1}^{2}+x_{2}^{2}.$ The exact solution
\textit{\emph{$u\in C^{\infty}(\bar{\Omega})$}} to problem (\ref{test1})
is given by 
\[
u\left(x\right)=e^{\left(\left|x\right|^{2}-1\right)}-1.
\]

\begin{rem}
When computing the approximate solutions of these problems, we stopped
the iterations of the algorithm as soon as $\left|J_{h}(g_{h})\right|\leq10^{-6}.$
\end{rem}
We have discretized the optimization problem associated to the problem
(\ref{test1}). We solved the Poisson problem encountred at each iteration
of the algorithm by a fast Poisson solvers.

We have used as initial guess three different constant values for
$g_{h}^{0}.$  The results obtained after 68 iterations are summarized
in Table 1 (where $u_{h}^{c}$ denotes the computed approximate solution
and $\left\Vert .\right\Vert _{0,\Omega}=\left\Vert .\right\Vert _{L^{2}(\Omega)}$). 

The graph of $u_{h}^{c}$ and its contour plot obtained, for ${\displaystyle h=\nicefrac{1}{128}}$
has been respectively visualized on Figure 2 and Figure 3. 
\begin{table}[h]
\caption{First test problem : Convergence of the approximate solution.}

\begin{tabular}{|c|c|c|}
\hline 
$h$ & $g_{h}^{0}$ & $\left\Vert u-u_{h}^{c}\right\Vert _{0,\Omega}$\tabularnewline
\hline 
\hline 
$\begin{array}{c}
\nicefrac{1}{32}\\
\nicefrac{1}{32}\\
\nicefrac{1}{32}
\end{array}$ & $\begin{array}{c}
0.1\\
0.2\\
0.3
\end{array}$ & $\begin{array}{c}
0.8861\times10^{-4}\\
0.5497\times10^{-4}\\
0.3720\times10^{-4}
\end{array}$\tabularnewline
\hline 
$\begin{array}{c}
\nicefrac{1}{64}\\
\nicefrac{1}{64}\\
\nicefrac{1}{64}
\end{array}$ & $\begin{array}{c}
0.1\\
0.2\\
0.3
\end{array}$ & $\begin{array}{c}
0.3416\times10^{-4}\\
0.9121\times10^{-5}\\
0.7554\times10^{-5}
\end{array}$\tabularnewline
\hline 
$\begin{array}{c}
\nicefrac{1}{128}\\
\nicefrac{1}{128}\\
\nicefrac{1}{128}
\end{array}$ & $\begin{array}{c}
0.1\\
0.2\\
0.3
\end{array}$ & $\begin{array}{c}
0.6305\times10^{-5}\\
0.4981\times10^{-5}\\
0.7203\times10^{-6}
\end{array}$\tabularnewline
\hline 
\end{tabular}
\end{table}

\begin{figure}[h]
\includegraphics[width=7cm,height=5cm]{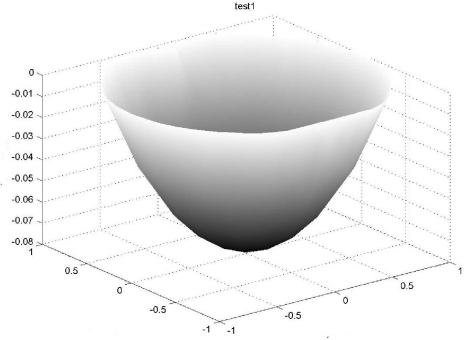}

\caption{First test problem : Graph of $u_{h}^{c}.$}
\end{figure}
\begin{figure}[h]
\includegraphics[width=5cm,height=5cm]{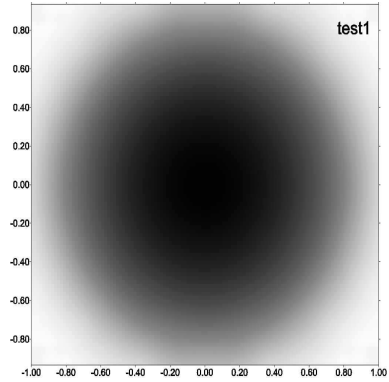}

\caption{First test problem : Contour plot of $u_{h}^{c}.$}
\end{figure}

We conclude from the results in Table 1 that the value $g^{0}=0.3$
is optimal and quite accurate approximations of the exact solutions
are obtained. 
\begin{rem}
We did not try to find the optimal value of $g^{0}$(it seems that
is a difficult problem).

\end{rem}
In\textbf{ the second test problem} we take

\[
f(x)={\displaystyle \left(\frac{4}{5}\right)^{2}\pi^{2}\left[\mbox{cos}^{2}\left(\frac{\pi}{2}\left(1-\left|x\right|^{2}\right)\right)+\frac{\pi}{2}\left(\left|x\right|^{2}\right)\mbox{sin}\left(\pi\left(1-\left|x\right|^{2}\right)\right)\right].}
\]
\foreignlanguage{english}{ The solution to the corresponding Monge-Ampère
problem is the function \textit{\emph{$u\in C^{\infty}(\bar{\Omega})$}}
defined by 
\[
u\left(x\right)=-\frac{4}{5}\mbox{sin}\left(\frac{\pi}{2}\left(1-\left|x\right|^{2}\right)\right).
\]
The method provides after 64 iterations the results summurized in
Table 2.}

\selectlanguage{english}%
The value $g^{0}=0.3$ is again optimal. 

\begin{table}[h]
\caption{second test problem : Convergence of the approximate solution.}

\begin{tabular}{|c|c|c|}
\hline 
$h$ & $g_{h}^{0}$ & $\left\Vert u-u_{h}^{c}\right\Vert _{0,\Omega}$\tabularnewline
\hline 
\hline 
$\begin{array}{c}
\nicefrac{1}{32}\\
\nicefrac{1}{32}\\
\nicefrac{1}{32}
\end{array}$ & $\begin{array}{c}
0.1\\
0.2\\
0.3
\end{array}$ & $\begin{array}{c}
0.6466\times10^{-4}\\
0.4510\times10^{-4}\\
0.2983\times10^{-4}
\end{array}$\tabularnewline
\hline 
$\begin{array}{c}
\nicefrac{1}{64}\\
\nicefrac{1}{64}\\
\nicefrac{1}{64}
\end{array}$ & $\begin{array}{c}
0.1\\
0.2\\
0.3
\end{array}$ & $\begin{array}{c}
0.1749\times10^{-4}\\
0.8507\times10^{-5}\\
0.6221\times10^{-5}
\end{array}$\tabularnewline
\hline 
$\begin{array}{c}
\nicefrac{1}{128}\\
\nicefrac{1}{128}\\
\nicefrac{1}{128}
\end{array}$ & $\begin{array}{c}
0.1\\
0.2\\
0.3
\end{array}$ & $\begin{array}{c}
0.3743\times10^{-5}\\
0.1180\times10^{-5}\\
0.5591\times10^{-6}
\end{array}$\tabularnewline
\hline 
\end{tabular}
\end{table}

\begin{figure*}
\label{c2}\includegraphics[width=7cm,height=5cm]{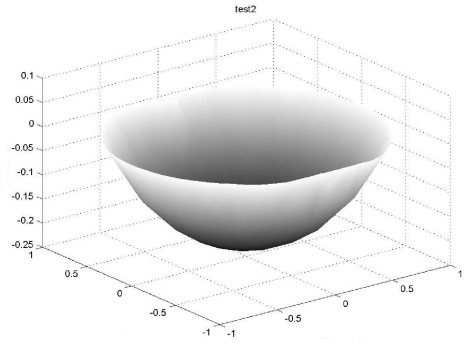}\caption{%
Second test problem : Graph of $u_{h}^{c}.$
}
\end{figure*}
\begin{figure*}
\label{c2}\includegraphics[width=5cm,height=5cm]{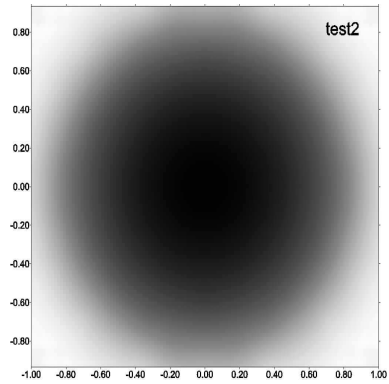}\caption{\selectlanguage{english}%
Second test problem : Contour plot of $u_{h}^{c}.$
}
\end{figure*}

The\textbf{ third test problem} is defined as follows

\begin{equation}
\left\{ \begin{array}{cc}
\mbox{det}[D^{2}u]=1 & x\in\Omega,\\
u_{|\Gamma}=0, & u\:\textrm{convex}.
\end{array}\right.\label{test3}
\end{equation}
The function $u$ given by 

\[
u(x)=\frac{1}{2}\left(\left|x\right|^{2}-1\right)
\]
is the solution of (\textit{\ref{test3}}\textit{\emph{) and  $u\in C^{\infty}(\bar{\Omega}).$}}

\begin{table}[h]
\caption{Third test problem : Convergence of the approximate solution.}

\begin{tabular}{|c|c|c|}
\hline 
$h$ & $g_{h}^{0}$ & $\left\Vert u-u_{h}^{c}\right\Vert _{0,\Omega}$\tabularnewline
\hline 
\hline 
$\begin{array}{c}
\nicefrac{1}{32}\\
\nicefrac{1}{32}\\
\nicefrac{1}{32}
\end{array}$ & $\begin{array}{c}
0.1\\
0.2\\
0.3
\end{array}$ & $\begin{array}{c}
0.3830\times10^{-3}\\
0.2564\times10^{-3}\\
0.2971\times10^{-3}
\end{array}$\tabularnewline
\hline 
$\begin{array}{c}
\nicefrac{1}{64}\\
\nicefrac{1}{64}\\
\nicefrac{1}{64}
\end{array}$ & $\begin{array}{c}
0.1\\
0.2\\
0.3
\end{array}$ & $\begin{array}{c}
0.7448\times10^{-4}\\
0.8529\times10^{-6}\\
0.9193\times10^{-5}
\end{array}$\tabularnewline
\hline 
$\begin{array}{c}
\nicefrac{1}{128}\\
\nicefrac{1}{128}\\
\nicefrac{1}{128}
\end{array}$ & $\begin{array}{c}
0.1\\
0.2\\
0.3
\end{array}$ & $\begin{array}{c}
0.6215\times10^{-5}\\
0.5837\times10^{-6}\\
0.3806\times10^{-5}
\end{array}$\tabularnewline
\hline 
\end{tabular}
\end{table}
 
\begin{figure}[h]
\label{c3}\includegraphics[width=7cm,height=5cm]{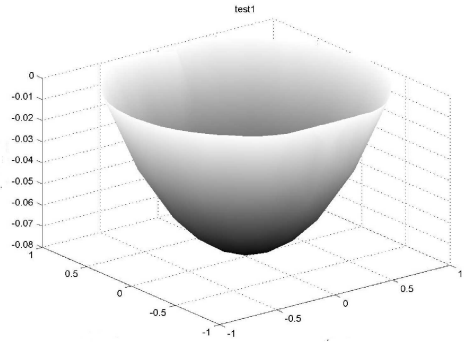}

\caption{Third test problem : Graph of $u_{h}^{c}.$}
\end{figure}
\begin{figure}[h]
\label{c3}\includegraphics[width=5cm,height=5cm]{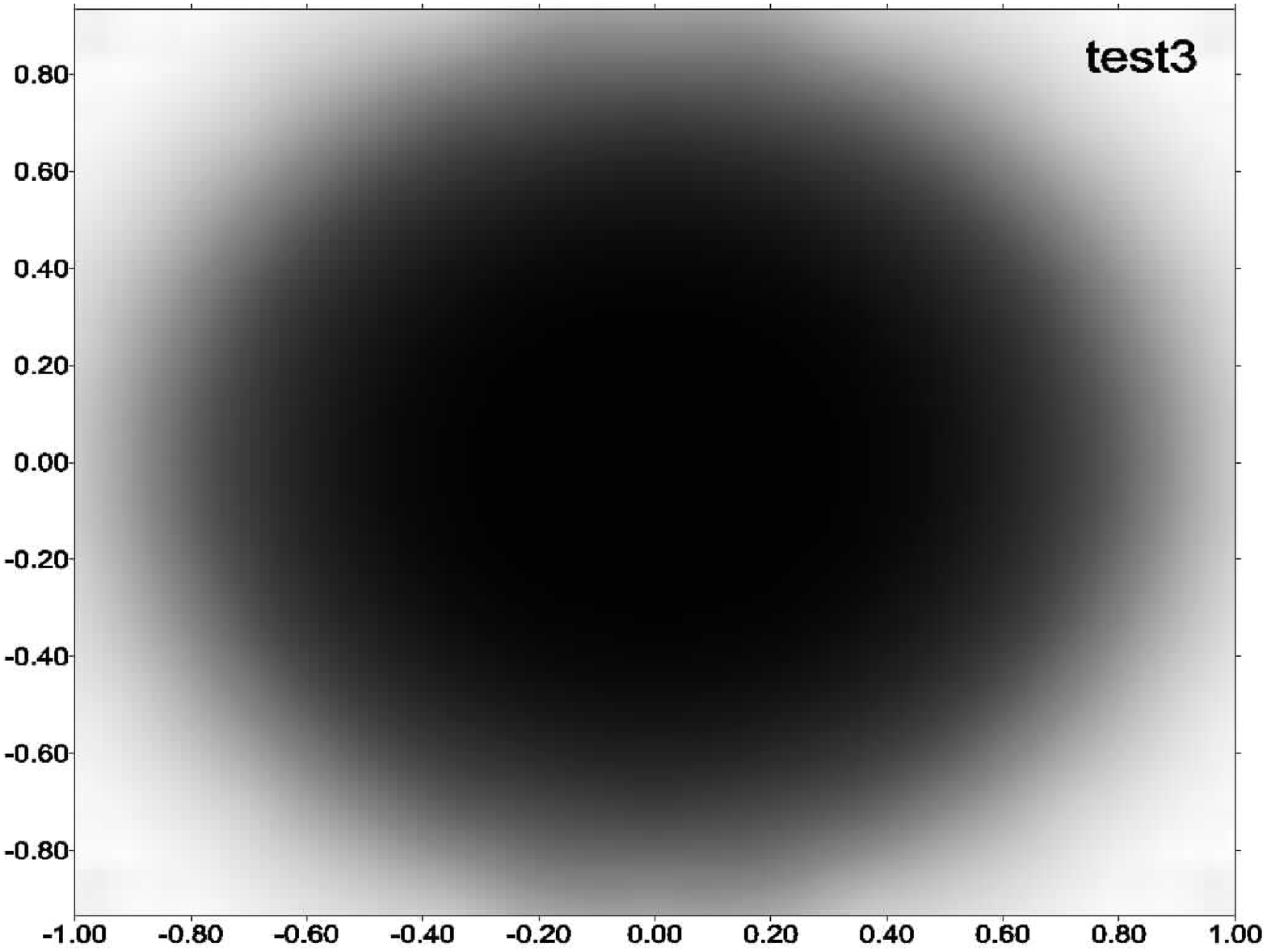}

\caption{Third test problem : Contour plot of $u_{h}^{c}.$}
\end{figure}
\index{}

We deduce from Table 3 that $g_{0}=0.2$ is an optimal value.

Unfortunaly I did not find any other initial value that gives more
accurate results. Even for $g_{0}=0.4$ the results are not satisfied.

\end{document}